\theoremstyle{plain}
\newtheorem{theorem}{Theorem} 
\newtheorem{lemma}[theorem]{Lemma}
\newtheorem{definition}[theorem]{Definition}
\newtheorem{example}[theorem]{Example}
\journal{Journal of Mathematical Psychology}
\begin{document}

\begin{frontmatter}

\title{A Representation Theorem for Finite Best-Worst Random Utility Models}

\author{Hans Colonius}
\address{University of Oldenburg}

\ead[url]{https://uol.de/en/hans-colonius/}

\ead{hans.colonius@uni-oldenburg.de}


\begin{abstract}
This paper investigates best-worst choice probabilities (picking the best and the worst alternative from an offered set). It is shown that non-negativity of best-worst Block-Marschak polynomials is necessary and sufficient  for the existence of a random utility representation. The representation theorem is obtained by extending proof techniques employed by Falmagne (1978) for a corresponding result on best choices (picking the best alternative from an offered set). 
\end{abstract}

\begin{keyword}
best-worst choices, random utility theory, Block-Marschak polynomial
\end{keyword}

\end{frontmatter}


%
\section{Introduction}
Choosing an element from an offered set of alternatives  is arguably the most basic paradigm of preference behavior. Typically, if the same set is offered several times, the participant's choice will not always be the same. This is often attributed to the participant's preference fluctuating over time due to the effect of various alternatives to be compared, or to the difficulty of distinguishing between similar alternatives. Theories of choice behavior  try to account for the probability $P_A(a)$ of choosing an alternative $a$, say, from an offered set $B$, which is a subset of the base set $A$. This intrinsic randomness leads naturally to postulating the existence of a random variable $U_b$, say, for each alternative $b \in B$ representing the momentary strength of preference for alternative $a$. The participant is supposed to choose $a$ from $B$ if the momentary (sampled) value of $U_a$ exceeds that of any other alternative  $U_b$, $b \in B\setminus \{a\}$. Such a  \textit{random utility representation} can be traced back to the beginnings of psychophysics (\cite{fechner1860,wundt1978,thurstone1927}; see \cite{falmagne1985}, \cite{link1994}, and \cite{dzhafarov2011}) and may perhaps be considered a cornerstone of both early and contemporary theories of choice and decision making (``discrete choice'') in psychology, economics, statistics, and beyond (\cite{luce1959,block1960,tversky1972,corbinmarley1974, manskimcfadden1981,fishburn1998, louviere2000,hess2012}). 

An alternative to the best-choice paradigm is the \textit{ranking paradigm}: the participant is asked to rank-order all elements of an offered set from best to worst resulting in a probability distribution over all possible rankings. Many statistical models have been proposed for this paradigm (e.g. \cite{critchlow1991}) and some are directly connected to models of best choice. For example, a classic result of \cite{block1960} shows that, under specified conditions, the existence of a probability distribution over all possible rankings is necessary and sufficient for a random utility representation of best choices. The present paper concerns a relatively recent choice paradigm that, in terms of complexity, lies somewhat in-between the paradigms of choice and ranking.

\cite{marley1968} developed the \textit{reversible ranking} model where a ranking is obtained by a sequence of best and/or worst choices (which Marley called superior and/or inferior). Motivated, in part, by their familiarity with that work, \cite{finn1992} proposed a discrete choice task in which a participant is asked to select both the best and the worst option in an available (sub)set of options. For an offered set $B$, subset of a base set $A$, let $BW_B(a,b)$ be the probability that a participant chooses $a$ as best and $b$ as worst alternative in the set $B$. As observed in \cite{marley2005}, if there are, e.g., 4 items in $A$, one obtains information about the best option in 9 out of 11 possible non-empty, non-singleton subsets of $A$. Thus, best-worst choices contain a great deal of information about the person's ranking of options. Applications of this best-worst choice paradigm have strongly increased over the years. In particular, best-worst scaling is being used as a method of collecting ranking data which is then modeled in various ways related to the multinomial logit for best choices or to weighted versions of the rank ordered logit for repeated best choices (for details, see the monograph by \cite{louviere2015}, p.11pp.)
 
Marley and colleagues have developed various random ranking and random utility models for the best-worst paradigm (\cite{marley2005}; see also \cite{marleyregenwetter2017}). However, one problem has apparently remained unsolved up to now: What are necessary and sufficient conditions on the probabilities $BW_B(a,b)$ for the existence of a random utility representation, that is, for the existence of random variables $U_a, U_b, U_c \in A$ such that for any $B \subseteq A$  
\[BW_B(a,b)= P\left(\bigcap_{c\in B\setminus\{a,b\}} \{U_a \ge U_c \ge U_b\}\right) ?\]
The aim of this paper is to give a complete answer to this question. The solution leans heavily on an approach to the analogous problem for the choice paradigm developed by \cite{falmagne1978}. He was able to show that the non-negativity of certain linear combinations of choice probabilities (the so-called \textit{Block-Marschak polynomials}) is both necessary and sufficient for a random utility representation of best choices. An important part of Falmagne's ingenious proof is the construction of a probability measure on the set of rankings which, via the above-mentioned result by \cite{block1960}, implies the existence of a random utility representation. As it turns out, Falmagne's approach can be extended in a certain way to find necessary and sufficient conditions for the existence of a random utility representation for best-worst choices as well. 

This paper is organized as follows. After introducing some basic notation, we give a formal definition of a system of best-worst choice probabilities and its corresponding random utility representation. As a first result, a necessary condition for this representation in the form of linear inequalities of best-worst choices is provided, closely following arguments from \cite{falmagne1978} for best choices. Section~\ref{sec:BWBM} introduces best-worst Block-Marschak polynomials, shows how to recover best-worst choice probabilities from them using the Moebius inversion (Theorem~\ref{moebius}), and states the main representation theorem. Section~\ref{counting} investigates the structure of rankings (permutations) compatible with best-worst choices including some counting results. Section~\ref{bmrep}  contains the best-worst probability version of the Block-Marschak result, that is, a probability measure on rankings is necessary and sufficient for best-worst random utility representations. A probability measure on rankings is developed in the subsequent section such that best-worst choice probabilities are defined in terms of the probability measure on appropriate subsets of rankings, completing the proof of the main theorem. Finally, some open ends and related findings are outlined in the concluding section. Because some parts of our results have been obtained before, this will be mentioned throughout the text, to the best of our knowledge.
\section{Some definitions and basic results}
For a finite set $X$, we write $|X|$ for the number of elements in $X$, $\mathscr{P}(X)$ for the power set of $X$, $\mathscr{P}(X,i)$ for the set of all subsets of $X$ containing exactly $i$ elements. For  any nonempty set $X$, finite or not, let  $\Phi(X)$ be the set of all finite, nonempty subsets of $X$. 

%

\begin{definition}\normalfont{
		Let $A$ be a nonempty set  of $n$ elements ($n\ge 2$). For any $B\subseteq A$, with  $|B| \ge 2$, and any $a, b \in B$, $a\ne b$, 
		let $BW_B(a,b)\mapsto [0,1]$ denote  the probability that $a$ and $b$ are respectively chosen as best and worst elements in the subset $B$ of $A$. Let $\mathbb{BWP}= \{BW_B\,|\, B\subseteq A, BW_B(a,b)\mapsto [0,1],  a,b \in B, a\ne b\}$ be the collection of all those probabilities. Suppose that
		\[ \sum_{a,b \,\in B, \, a\ne b}  BW_B(a,b)=1 \hspace{1cm} (BW \in \mathbb{BWP})\]
		Then $(A,\mathbb{BWP})$ is called a \emph{system of best-worst choice probabilities}, or more briefly, a \emph{system}. If $A$ is finite, then $(A,\mathbb{BWP})$ is called \emph{finite}.}
\end{definition}
The quantities $BW_B(a,b)$ will be referred to as \emph{best-worst choice probabilities} indicating that an alternative $a$ is judged as best and $b$ as worst, when subset $B$ is available. 
\begin{definition}\label{RR}
\normalfont{A finite system $(A,\mathbb{BWP})$ is called \emph{a best-worst random utility system} if there exists a collection of  jointly distributed random variables $\{U_c\,|\, c \in A\}$ such that for all $B\subseteq A$ and $a,b\in B$, $a\ne b$,
%
%
%
\[BW_B(a,b)= P\left(\bigcap_{c\in B\setminus\{a,b\}} \{U_a \ge U_c \ge U_b\}\right) .\]
The collection $\{U_c\,|\, c \in A\}$ will be \emph{called a random representation of} $(A,\mathbb{BWP})$.}	
\end{definition}	

From now on, we will always assume the systems to be finite without mentioning it specifically.
Let us introduce two abbreviations:
\[ \mathrm{M}_B^+ =\max\{U_c\,|\, c\in B\} \;\;\text{ and } \;\; \mathrm{M}_B^- =\min\{U_c\,|\, c\in B\}. \]
Assume that $\{U_c\,|\, c\in A\}$ is a random representation of $(A,\mathbb{BWP})$. A simple, but important implication of Definition~\ref{RR} is the following: for any distinct $a,b\in A$
\begin{align*}
1 &= BW_{\{a,b\}}(a,b)+ BW_{\{a,b\}}(b,a)\\
& = P(U_a>U_b)+P(U_a=U_b)+P(U_b>U_a) +P(U_b=U_a).
\end{align*}
Since we also have 
\begin{align*}
1 & = P(U_a>U_b)+P(U_a=U_b)+P(U_b>U_a),
\end{align*}
we obtain 
\[ P(U_a=U_b)=0. \]
We conclude that with $a,b \in B$
\[ BW_B(a,b)=P(U_a\ge \mathrm{M}_B^+  \ge \mathrm{M}_B^- \ge U_b) = P(U_a > \mathrm{M}^+_{B\setminus\{a,b\}}  \ge \mathrm{M}_{B\setminus\{a,b\}}^- > U_b). \]

Now let $B_0, B_1, \ldots, B_n \in \Phi(A)$; for $a, b\in B_0$, 
\[ BW_{B_0}(a,b)= P(U_a\ge \mathrm{M}^+_{B_0} \ge \mathrm{M}_{B_0}^- \ge U_b) . \]
Denote 
\[ E_i=\{U_a\ge M^+_{B_i} \ge M_{B_i}^-\ge U_b\} \]
for $0\le i\le n$, and observe that, for $0\le i,j\le n$,.
\begin{align*}
E_i \cap E_j &=\{U_a\ge M^+_{B_i} \ge M_{B_i}^-\ge U_b \} \cap \{U_a\ge M^+_{B_j} \ge M_{B_j}^-\ge U_b \}\\
					&= \{U_a\ge M^+_{B_i \cup B_j} \ge M_{B_i \cup B_j}^-  \ge U_b\}.
\end{align*}
For $a, b\in B_0$, this implies 
\begin{equation*}
BW_{B_0\cup B_1 \cup \cdots B_n}(a,b)= P\left(\bigcap_{i=0}^n E_i\right).
\end{equation*}
From this follows, for example,
\begin{equation}\label{eq:1}
BW_{B_0}(a,b)-[BW_{B_0\cup B_1}(a,b)+BW_{B_0\cup B_2}(a,b)] + BW_{B_0\cup B_1 \cup B_2}(a,b)\ge 0,
\end{equation}
since 
\[ P(E_0)-[PE_0\cap E_1) +P(E_0 \cap E_2)] +P(E_0\cap E_1\cap E_2) \ge 0, \]
holds for arbitrary events $E_0, E_1, E_2$ in a probability space. The following theorem, and its proof, is completely parallel to the one in Falmagne (1978, Theorem 1).
\begin{theorem}\label{theor 2}
Let $(A,\mathbb{BWP})$ be a best-worst random utility system. For any $a,b \in B_0 \in \Phi(A)$, and any finite collection $\mathscr{B} =\{B_j\, |\, j\in J, B_j \subset A \text{ or }  B_j=\emptyset\}$, we have 
\begin{equation}\label{theor:2.1}
\sum_{i=0}^{|J|} (-1)^i \sum_{C\in \mathscr{P}(J,i)} BW_{B_0\cup \mathscr{B}(C)}(a,b) \ge 0,
\end{equation}
where $\mathscr{B}(C)=\bigcup_{j\in C} B_j$.
\end{theorem}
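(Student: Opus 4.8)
The plan is to recognize the left-hand side of~\eqref{theor:2.1} as the probability of a single ``difference'' event and then invoke non-negativity of probability, exactly the mechanism that produced inequality~\eqref{eq:1}. First I would fix the events
\[ E_i = \{U_a \ge \mathrm{M}_{B_i}^+ \ge \mathrm{M}_{B_i}^- \ge U_b\} \quad (i \in \{0\}\cup J), \]
with the convention $E_j = \Omega$ whenever $B_j=\emptyset$, which is consistent with $B_0\cup B_j = B_0$. The structural fact I need is the extension of the two-set identity $E_i \cap E_j = \{U_a \ge \mathrm{M}_{B_i\cup B_j}^+ \ge \mathrm{M}_{B_i\cup B_j}^- \ge U_b\}$ established above to an arbitrary finite intersection,
\[ \bigcap_{i\in\{0\}\cup C} E_i = \{U_a \ge \mathrm{M}_{B_0\cup\mathscr{B}(C)}^+ \ge \mathrm{M}_{B_0\cup\mathscr{B}(C)}^- \ge U_b\}, \qquad C\subseteq J. \]
This follows by induction on $|C|$: the left-hand side of the two-set identity is again an event of the same form indexed by the single set $B_i\cup B_j$, so the inductive step is simply a further application of that identity, with $B_i$ replaced by $B_0\cup\mathscr{B}(C')$. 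At bottom this rests only on $\mathrm{M}_{X\cup Y}^+ = \max(\mathrm{M}_X^+,\mathrm{M}_Y^+)$ and $\mathrm{M}_{X\cup Y}^- = \min(\mathrm{M}_X^-,\mathrm{M}_Y^-)$. Taking probabilities then yields $P\!\left(\bigcap_{i\in\{0\}\cup C} E_i\right) = BW_{B_0\cup\mathscr{B}(C)}(a,b)$ for every $C\subseteq J$.

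Next I would carry out the inclusion--exclusion step at the level of indicators. Starting from the identity $\mathbf{1}_{E_0}\prod_{j\in J}(1-\mathbf{1}_{E_j}) = \sum_{C\subseteq J}(-1)^{|C|}\,\mathbf{1}_{E_0\cap\bigcap_{j\in C}E_j}$ and taking expectations, I obtain
\[ P\!\left(E_0 \cap \bigcap_{j\in J} E_j^{\,c}\right) = \sum_{C\subseteq J}(-1)^{|C|}\,P\!\left(\bigcap_{i\in\{0\}\cup C}E_i\right) = \sum_{i=0}^{|J|}(-1)^i \sum_{C\in\mathscr{P}(J,i)} BW_{B_0\cup\mathscr{B}(C)}(a,b), \]
where the last equality groups the subsets $C$ by cardinality $i=|C|$ and substitutes the translation from the previous paragraph. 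The left-hand side is the probability of the event obtained by deleting $\bigcup_{j\in J}E_j$ from $E_0$, hence non-negative, and this is precisely~\eqref{theor:2.1}.

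The only point demanding a little care, rather than a genuine obstacle, is the presence of empty members $B_j$. Under the convention $E_j=\Omega$ the factor $1-\mathbf{1}_{E_j}$ vanishes, so both sides above are $0$; combinatorially, the terms for $C$ and $C\cup\{j\}$ cancel in pairs because $B_0\cup\mathscr{B}(C)=B_0\cup\mathscr{B}(C\cup\{j\})$, and the inequality holds trivially as $0\ge 0$. Finally, $a,b\in B_0\subseteq B_0\cup\mathscr{B}(C)$ for every $C$, so each best-worst probability appearing in the sum is well defined. Since the genuine probabilistic input is nothing more than the Bonferroni-type identity for $E_0\setminus\bigcup_{j}E_j$, the substantive work is entirely the bookkeeping that converts unions of offered sets into intersections of the associated events; that conversion is exactly what the $\max$/$\min$ compatibility of $\mathrm{M}^+$ and $\mathrm{M}^-$ supplies.
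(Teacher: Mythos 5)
Your proof is correct and follows essentially the same route as the paper: both translate unions of offered sets into intersections of the events $E_i$ via the $\max/\min$ compatibility, and both then recognize the alternating sum as $P\bigl(E_0 \setminus \bigcup_{j\in J} E_j\bigr) \ge 0$ by inclusion--exclusion (the paper phrases this conditionally via Poincar\'{e}'s identity after dividing by $P(E_0)$, while you expand indicators directly, which also lets you treat the empty-set members a bit more explicitly). The differences are cosmetic, not substantive.
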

Note that the case $\mathscr{B}=\{B_1,B_2\}$ corresponds to Equation~\ref{eq:1}, while $\mathscr{B}=\{\emptyset\}$, $\mathscr{B}=\{B_1\}$ yield, respectively, 
\begin{align*}
BW_{B_0}(a,b) &\ge 0,\\
BW_{B_0}(a,b) -BW_{B_0 \cup B_1}(a,b)&\ge 0.
\end{align*}

\begin{proof}
Writing $E_0$ for the event $\{U_a \ge U_c\ge U_b, \, c\in B_0\setminus\{a,b\})\}$ and $E_j, j\in J,$ for the event $\{U_a\ge M^+_{B_j} \ge M_{B_j}^-\ge U_b\}$, we get
\[ BW_{B_0\cup \mathscr{B}(C)}(a,b) =P\left[\,\bigcap_{j\in C} (E_0\cap E_j)\right]. \]
The theorem follows from the fact that, for any finite collection$\{E_j\,|\,j\in J\}$ of events and any event $E_0$ in a probability space, we have
\begin{equation}\label{eq:3}
\sum_{i=0}^{|J|} (-1)^i \sum_{C\in \mathscr{P}(J,i)} P\left[\,\bigcap_{j\in C} (E_0\cap E_j)\right] \ge 0.
\end{equation}
Indeed, (\ref{eq:3}) certainly holds if $P(E_0)=0$; while if $P(E_0)\ne 0$, dividing on both sides by $P(E_0)$, (\ref{eq:3}) is equivalent, by \textit{Poincar\'{e}'s identity},   to 
\[ 1 \ge P\left(\bigcup_{j\in J} E_j \,|\, E_0\right) .\]
\end{proof}	
\section{Best-Worst Block-Marschak polynomials: the main theorem}\label{sec:BWBM}
Suppose $(A,\mathbb{BWP})$ is a system. Consider the following expressions:
\begin{align*}
& BW_A(a,b),\\
& BW_{A\setminus\{c\}}(a,b)- BW_A(a,b),\\
& BW_{A\setminus\{c,d\}}(a,b)-[BW_{A\setminus\{c\}}(a,b)+BW_{A\setminus\{d\}}(a,b)]+BW_A(a,b),\\
&  BW_{A\setminus\{c,d,e\}}(a,b)-[BW_{A\setminus\{c,d\}}(a,b)+BW_{A\setminus\{c,e\}}(a,b)+BW_{A\setminus\{d,e\}}(a,b)]+\\
&\hspace{2cm}[BW_{A\setminus\{c\}}(a,b)+BW_{A\setminus\{d\}}(a,b)+BW_{A\setminus\{e\}}(a,b)]- BW_A(a,b),\\
& \text{etc.}
\end{align*}
Each of these expressions is a case of the one in the left member of (\ref{theor:2.1}). In analogy to Falmagne's (1978) terminology, we introduce a compact notation.
\begin{definition}
	For any $B\subset A$, $B\neq A$, $a,b \in A\setminus B, a\ne b,$ in a system $(A,\mathbb{BWP})$, we define
\begin{equation}\label{BM}
K_{ab,B} = \sum_{i=0}^{|B|} (-1)^i \sum_{C\in \mathscr{P}(B, |B|-i)} BW_{A\setminus C}(a,b).
\end{equation}
The $K_{ab,B}$ are called \emph{best-worst Block-Marschak polynomials of} $(A,\mathbb{BWP})$, or \emph{best-worst BM polynomials}, for short.
\end{definition}
Observe that 
\begin{align*}
K_{ab,\emptyset} &=\sum_{i=0}^0 (-1)^i \sum_{C\in \mathscr{P}(\emptyset, 0-i)} BW_{A\setminus C}(a,b)\\
                             & = BW_A(a,b);\\
      K_{ab,\{c\}}    &=    BW_{A\setminus\{c\}}(a,b)- BW_A(a,b)  \\
                             & =    BW_{A\setminus\{c\}}(a,b)- K_{ab,\emptyset};\\
          K_{ab,\{c,d\}} & =  BW_{A\setminus\{c,d\}}(a,b)-[BW_{A\setminus\{c\}}(a,b)+BW_{A\setminus\{d\}}(a,b)]+BW_A(a,b) \\
          					&= BW_{A\setminus\{c,d\}}(a,b)- K_{ab,\{c\}} - K_{ab,\{d\}} -K_{ab,\emptyset} .
\end{align*}
Similar computations show that 
\begin{align*}
K_{ab,\{c,d,e\}} &=  BW_{A\setminus\{c,d,e\}}(a,b)-  K_{ab,\{c,d\}} -  K_{ab,\{c,e\}} -  K_{ab,\{d,e\}} \\
 							&  \hspace{1cm}-K_{ab,\{c\}} - K_{ab,\{d\}}  - K_{ab,\{e\}}  -K_{ab,\emptyset}
\end{align*}
These examples suggest the following result.
\begin{theorem}\label{moebius}
	Let $(A,\mathbb{BWP})$ be  a system of best-worst choice probabilities. Then, for all $B\subset A, B\ne A$, and $a,b \in A\setminus B$,
\begin{equation}
BW_{A\setminus B}(a,b) = \sum_{C\in \mathscr{P}(B)} K_{ab, C}.
\end{equation}
\end{theorem}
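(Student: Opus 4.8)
The plan is to recognize the definition of $K_{ab,B}$ as a Möbius inversion over the Boolean lattice of subsets of $B$, and then to invert it. First I would rewrite the defining equation (\ref{BM}) in a cleaner form by replacing the counter $i$ with the subset $C$ itself: since a subset $C\in\mathscr{P}(B,|B|-i)$ has $|C|=|B|-i$, we have $i=|B|-|C|$, so that
\begin{equation*}
K_{ab,B} = \sum_{C\subseteq B} (-1)^{|B|-|C|}\, BW_{A\setminus C}(a,b).
\end{equation*}
Holding $a,b$ fixed throughout and abbreviating $f(C)=BW_{A\setminus C}(a,b)$, this reads $K_{ab,B}=\sum_{C\subseteq B}(-1)^{|B|-|C|}f(C)$, the standard ``difference'' form of Möbius inversion on the Boolean lattice. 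The assertion $BW_{A\setminus B}(a,b)=\sum_{C\in\mathscr{P}(B)}K_{ab,C}$ is then exactly the companion ``summation'' relation $f(B)=\sum_{C\subseteq B}K_{ab,C}$.

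To verify this directly, I would substitute the rewritten definition of each $K_{ab,C}$ into the right-hand side and interchange the order of summation:
\begin{equation*}
\sum_{C\subseteq B} K_{ab,C} = \sum_{C\subseteq B}\sum_{D\subseteq C} (-1)^{|C|-|D|} f(D) = \sum_{D\subseteq B} f(D) \sum_{C\,:\,D\subseteq C\subseteq B} (-1)^{|C|-|D|}.
\end{equation*}
For fixed $D\subseteq B$, the sets $C$ with $D\subseteq C\subseteq B$ are in bijection with the subsets $S=C\setminus D$ of the $m$-element set $B\setminus D$, where $m=|B|-|D|$ and $|C|-|D|=|S|$. Hence the inner sum equals $\sum_{S\subseteq B\setminus D}(-1)^{|S|}=\sum_{k=0}^{m}\binom{m}{k}(-1)^k=(1-1)^m$, which is $1$ when $m=0$ (that is, $D=B$) and $0$ otherwise. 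Only the term $D=B$ survives, giving $\sum_{C\subseteq B}K_{ab,C}=f(B)=BW_{A\setminus B}(a,b)$, as claimed.

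There is no serious obstacle here: the statement is the classical Möbius inversion over a Boolean lattice, and the only nontrivial ingredient is the vanishing of the alternating binomial sum $(1-1)^m=0$ for $m\ge 1$. The points requiring care are purely bookkeeping — the re-indexing from the counter $i$ to the subset $C$, and the legitimacy of swapping the two finite sums — both routine. An alternative, if one prefers to avoid the explicit double sum, would be induction on $|B|$, peeling off the top polynomial $K_{ab,B}$ and using its own defining relation to telescope against the inductive hypothesis; but the direct computation above is cleaner and is what I would present.
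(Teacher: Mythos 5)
Your proof is correct, and it follows exactly the route the paper itself points to: the paper omits the proof but cites the M\"{o}bius-inversion argument (Colonius, 1984) as one of its two references, which is precisely your rewriting of $K_{ab,B}$ as an alternating sum over subsets followed by the interchange of summation and the vanishing of $(1-1)^m$ for $m\ge 1$. Your closing remark about an inductive alternative corresponds to the paper's other cited route (Falmagne, 1978), so nothing is missing.
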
                                                                                                                                                         %
Proof of this theorem is omitted here since it is completely analogous to the one in Falmagne (1978, Theorem 2, pp. 57--8) by replacing the ``ordinary'' BM polynomials by best-worst BM polynomials. Alternatively, with the same polynomial replacement, it is also analogous to the one given in \cite{colonius1984}, pp. 58-60, using \emph{M\"{o}bius inversion} (for the latter definition see, e.g. \cite{vanlint2001}).
%
%
\noindent We can now state the main theorem of this paper.
\begin{theorem}\label{main}
	A finite system of best-worst choice probabilities is a best-worst random utility system if and only if the best-worst Block-Marschak polynomials are nonnegative. 
\end{theorem}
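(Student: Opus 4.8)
My plan is to treat the two implications separately, with essentially all of the effort going into sufficiency. Necessity is already close at hand: I would fix $B\subset A$ with $a,b\in A\setminus B$ and apply Theorem~\ref{theor 2} with $B_0=A\setminus B$ (which contains both $a$ and $b$) and with the collection $\mathscr{B}=\{\{c\}\mid c\in B\}$ indexed by $J=B$. Since then $\mathscr{B}(C)=C$ for every $C\subseteq B$, the reindexing $C\mapsto B\setminus C$ identifies the left-hand side of (\ref{theor:2.1}) with the defining sum (\ref{BM}) of $K_{ab,B}$; hence in any best-worst random utility system every best-worst Block--Marschak polynomial is nonnegative.

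For sufficiency I would proceed in two stages. The first is a best-worst counterpart of the classical theorem of \cite{block1960}, to be established in Section~\ref{bmrep}: $(A,\mathbb{BWP})$ is a best-worst random utility system if and only if there is a probability distribution $\lambda$ on the set of rankings (linear orders) of $A$ with
\begin{equation*}
BW_B(a,b)=\lambda\bigl(\{\pi:\, a \text{ is } \pi\text{-greatest in } B \text{ and } b \text{ is } \pi\text{-least in } B\}\bigr)
\end{equation*}
for all $B$ and $a,b\in B$. The forward direction is immediate once we recall that ties have probability zero, so a random representation $\{U_c\}$ induces, almost surely, a random ranking whose law is the desired $\lambda$; the converse is obtained by reading ranks as utilities. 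This reduces the theorem to a purely combinatorial task: build such a $\lambda$ out of the numbers $K_{ab,B}$, using only their nonnegativity.

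The second stage is that construction, and it carries the real work. Theorem~\ref{moebius} already dictates how the $K_{ab,B}$ must be read: for a ranking $\pi$ in which $a$ lies above $b$, call the \emph{outside set} of $(a,b)$ the collection of elements ranked above $a$ together with those ranked below $b$; then $K_{ab,C}$ should be realized as the $\lambda$-mass of the rankings whose outside set of $(a,b)$ is exactly $C$, and the identity $BW_{A\setminus B}(a,b)=\sum_{C\subseteq B}K_{ab,C}$ merely records that $a$ is top and $b$ is bottom of $A\setminus B$ precisely when this outside set is contained in $B$. I would then build $\lambda$ inductively, peeling off a (best, worst) pair from both ends simultaneously: the overall best/worst pair is governed by $K_{ab,\emptyset}=BW_A(a,b)$, the second-best/second-worst pair among the remaining elements by the best-worst probabilities on the reduced set (equivalently by the $K$'s with outside sets enlarged by the already-peeled extremes), and so on, spreading the mass of each $K_{ab,C}$ across the rankings compatible with it in the proportions dictated by the counting results of Section~\ref{counting}.

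The main obstacle is the \emph{global consistency} of this allocation. The numbers $K_{ab,C}$ prescribe, simultaneously for every ordered pair $(a,b)$, a marginal over outside-set configurations, and these marginals do not by themselves pin down the probabilities of individual full rankings; what is required is a single distribution $\lambda$ on all $n!$ rankings that reproduces every one of these marginals at once. Nonnegativity of the best-worst Block--Marschak polynomials is exactly the hypothesis that keeps the conditional weights produced at each peeling step nonnegative and correctly normalized, so that the recursion terminates in a genuine probability distribution. The delicate point is to check that the weights introduced at a given step are compatible with those already committed at earlier steps, so that no ranking receives conflicting mass; I expect this to be settled by the counting identities of Section~\ref{counting} together with the telescoping supplied by Theorem~\ref{moebius}.
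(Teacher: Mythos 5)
Your necessity argument is exactly the paper's: instantiating Theorem~\ref{theor 2} with $B_0=A\setminus B$ and the singletons $\{c\}$, $c\in B$, so that after reindexing $C\mapsto B\setminus C$ the inclusion--exclusion sum becomes the defining expression~(\ref{BM}) of $K_{ab,B}$. Likewise, your first stage of sufficiency is precisely the paper's Lemma~\ref{lem:BM}, and your reading of $K_{ab,C}$ as the intended mass of the rankings whose ``outside set'' for $(a,b)$ is exactly $C$ is the correct target; the paper formalizes it via Lemma~\ref{lemma:disjoint}, which writes $S(aBb)$ as a disjoint union of the sets $S(\pi_1 aAb\,\pi_2)$ over $C\in\mathscr{P}(A\setminus B)$, $\pi\in\Pi_C$ and splits $(\pi_1\pi_2)=\pi$, together with Theorem~\ref{moebius}.

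The gap is that your second stage never actually constructs $\lambda$: the point you label ``delicate'' and ``expect'' to be settled by counting identities is exactly where the whole proof lives, and your guess about the mechanism is off in one substantive respect. You propose to spread the mass $K_{ab,C}$ over the compatible rankings ``in the proportions dictated by the counting results''; count-proportional spreading cannot work, because each ranking carries outside-set constraints for \emph{every} ordered pair simultaneously. Already for $|A|=3$, the ranking $abc$ must receive mass $K_{ac,\emptyset}=BW_A(a,c)$ (it is the unique ranking with outside set $\emptyset$ for the pair $(a,c)$), which an even split of $K_{ab,\{c\}}$ between $abc$ and $cab$ would generically violate. What the paper does instead (Section~\ref{probonranking}) is define a single function $\mathrm{F}^\prime$ by recursion on $k$: the weight of $S(a_1\ldots a_{k^\prime}Aa_{k^\prime+1}\ldots a_k)$ is the weight of the set with the innermost pair $(a_{k^\prime},a_{k^\prime+1})$ deleted, multiplied by $K_{a_{k^\prime}a_{k^\prime+1},\,\cdot}/(n-k)!$, and divided by a normalizing sum of already-constructed weights; thus the proportions are governed by lower-order Block--Marschak polynomials, not by cardinalities, and nonnegativity of the $K$'s enters exactly here, making $\mathrm{F}^\prime\ge 0$ (Lemma~\ref{functionFprime}(a)). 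The cross-pair consistency you worry about is then not checked step by step at all: it is established after the fact by three verifications --- Lemma~\ref{functionFprime}(b) (summing $\mathrm{F}^\prime$ over arrangements of $B$ recovers $K_{ab,B}/(n-k)!$), the induction establishing~(\ref{sumgen}) (total mass $1$), and the closing computation that applies Lemma~\ref{lemma:disjoint}, then Lemma~\ref{functionFprime}, then Theorem~\ref{moebius} to obtain $\mathrm{P}[S(aBb)]=\sum_{C\in\mathscr{P}(A\setminus B)}K_{ab,C}=BW_B(a,b)$. Without the explicit recursion and these verifications, your proposal is a correct outline of the architecture, not a proof.
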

The necessity follows from Theorem~\ref{theor 2} and the definition of best-worst Block-Marschak polynomials (Section~\ref{sec:BWBM}). 
The rest of the paper concerns the sufficiency, i.e. to show that if  \[ K_{ab,B}\ge 0 \] for all $B\subset A, B\ne A$ and  $a,b \in A \setminus B$, then $(A,\mathbb{BWP})$ is a best-worst random utility system. The proof requires an analysis of the Boolean algebra of the sets of permutations on $A$ in a system $(A,\mathbb{BWP})$ of best-worst choice probabilities.


%
\section{Sets of rankings and counting results}\label{counting}
The next definition is our basic tool for constructing the probability measure on the rankings (permutations) in the subsequent section. It is illustrated by a number of examples. Moreover, a counting lemma and a partitioning lemma needed for the construction are presented here.

For any $B\subseteq A$, we write $\Pi_B$ for the set of $|B|!$ permutations on $B$. For simplicity, we abbreviate $\Pi_A =\Pi$. Let $\ge$ be an arbitrarily chosen simple order on $A$. As usual, we write, for any $a,b \in A, a<b$ iff not $a\ge b$, and $a>b$ iff not $b \ge a$. 
\begin{definition}\label{rankingsets}\normalfont{
	Let $|A|=n$ with $n>\nolinebreak2$ and $B\subseteq A$  with $|B|=n-m$  $(n\ge m)$; for $B \in \Phi(A)$ and distinct $b_1, b_2, \ldots, b_{k^\prime}, b_{k^\prime+1}, b_{k^\prime+2},\ldots, b_k \in B$ $(1\le k^\prime < k\le n-m)$, define
	\begin{align*}
	& \mathrm{S}(b_1b_2\ldots b_{k^\prime};B;b_{k^\prime+1}b_{k^\prime+2}\ldots b_k)= \nonumber\\
	&\quad \{\pi \in \Pi\,|\,    \pi(b_1)>\pi(b_2)>\cdots  >\pi(b_{k^\prime})>\pi(b) >\pi(b_{k^\prime+1})  >\pi(b_{k^\prime+2})>\cdots 
	>\pi(b_k),\nonumber \\& \text{ for all $b\in B\setminus\{b_1, \ldots, b_k\}$}  \}.
	\end{align*}}
\end{definition}
For simplicity, we write $b_1b_2\ldots b_k$ for the ranking (defined by $>$) corresponding to permutation $\pi$ with $  \pi(b_1)>\pi(b_2)>\cdots  >\pi(b_k)$. Moreover, if no confusion arises we also omit the semicolons around set $B$ and write  \\$\mathrm{S}(b_1b_2\ldots b_{k^\prime}Bb_{k^\prime+1}b_{k^\prime+2}\ldots b_k)$. The following examples illustrate the properties of the sets defined above.
\begin{example}[label=exa1:cont]
	Let $A=\{p,q,r,u,v\}$ and  $B=\{p,q,r\}$; determine $\mathrm{S}(pBq)$. 
	Note that here, $r$ must always be between $p$ and $q$; we set up a table such that the elements $u$ and $v$, which are not in $B$, are positioned among the elements of $B$ in both possible orders; for order $uv$  this yields Table~\ref{tab1}.
	\begin{table}\label{tab1}
		\begin{center}	
			$	\begin{array}{|c|c|c|c|c|c|c||c|} 
			\hline 	
			\mathtt{P}_1	& p & \mathtt{P}_2  & r & \mathtt{P}_3 & q& \mathtt{P}_4 &\text{ranking}\\ 	\hline 	\hline
			uv	& &  &  &  & &  & uvprq\\ 
			u	&  &v  &  &  &  & &upvrq \\ 
			u	&  &  &  & v &  &  &uprvq\\ 
			u	&  &  &  &  &  & v & uprqv\\ 	\hline 	\hline
			&  & uv &  &  &  &  & puvrq\\ 
			&  &u  &  & v &  & &purvq \\ 
			&  & u &  &  &  &v  & purqv\\ 	\hline 	\hline
			&  &  &  & uv &  &  & pruvq\\ 
			&  &  &  &  u&  &  v &pruqv\\ 	\hline 	\hline
			& & & & & &uv &prquv\\	\hline
			\end{array}$\caption{for Example~\ref{tab1}}
		\end{center}
	\end{table}	
	For order $vu$ an analogous table exists. Thus, the total number of rankings is $|\mathrm{S}(pBq)|=20$. Moreover, 
	\[  \mathrm{S}(prBq)=\mathrm{S}(pBrq)=\mathrm{S}(pBq). \] 
\end{example}
\begin{example}[label=exa2:cont] Let $A=\{p,q,r,u,v\}$ and  $B=\{p,q\}$ then
	\[  \mathrm{S}(pBq)= \{\pi \in \Pi_A\,|\, \pi(p)>\pi(q)\}.\] 
	With $|\Pi_A|=5!=120$, it follows that $|\mathrm{S}(pBq)|=60$  since exactly half of the permutations have $p$ ranked before $q$.
\end{example}
\begin{example}[label=exa3:cont]
	Let $|A|=n$ and $B=\{b\}$ for $a,c \in A\setminus B$
	\[ \mathrm{S}(aA\setminus Bc) = \mathrm{S}(baAc) + \mathrm{S}(aAcb)+ \mathrm{S}(aAc)\]
\end{example}
\begin{example}
	Let $|A|=n$ and $B=\{b,d\}$; for $a,c \in A\setminus B$
	\begin{align*} &\mathrm{S}(aA\setminus Bc) = \mathrm{S}(baAc)+\mathrm{S}(aAcb) +\mathrm{S}(daAc)+\mathrm{S}(aAcd)    + \mathrm{S}(bdaAc)    \\
	&\qquad \quad +\mathrm{S}(dbaAc)   +\mathrm{S}(aAcbd)  +\mathrm{S}(aAcdb) + \mathrm{S}(baAcd)  +\mathrm{S}(daAcb)+ \mathrm{S}(aAc).\end{align*}
\end{example}
Counting the number of rankings  in certain sets gives some insight and, in any case,  is useful for checking results. For $|A|=n$ with $n>2$ and $B\subseteq A$  with $|B|=n-m$ we want to determine the number of elements contained in $\mathrm{S}(b_1b_2\ldots b_{k^\prime}Bb_{k^\prime+1}b_{k^\prime+2}\ldots b_k)$, where $0\le k^\prime < k\le n-m$. Before presenting the general result in the next lemma we consider an example from above.
\begin{example}[continues=exa1:cont]
	For $A=\{p,q,r,u,v\}$ and  $B=\{p,q,r\}$ determine $|\mathrm{S}(pBq)|$. Here, $|A|=n=5$, $|B|=n-m=3$, and Table~\ref{tab1} lists all 10 possible rankings of $A$ where $u$ is ranked before $v$. Specifically, there are  4 possible positions for $u$, denoted $\mathtt{P}_1, \mathtt{P}_2 , \mathtt{P}_3,\mathtt{P}_4$ in the top row of the table, and once a position for $u$ is chosen, $v$ can only positioned ``somewhere to the right'' of $u$, resulting in a total of 10 rankings. Choosing $v$ first and $u$ second yields another 10 rankings for a total of 20.
\end{example}
\begin{lemma}\label{count:lemma}
	Let $|A|=n$ with $n>2$ and $B\subseteq A$  with $|B|=n-m$; for $1\le k^\prime < k\le n-m$ the number of elements contained in \[\mathrm{S}(b_1b_2\ldots b_{k^\prime}B\,b_{k{^\prime +1}}b_{k{^\prime +2}}\ldots b_k)\]equals\footnote{for $m=0$, the $\Pi$ term is set to 1.},
	\begin{align*}
	|\mathrm{S}(b_1b_2\ldots b_{k^\prime}Bb_{k^\prime+1}b_{k^\prime+2}\ldots b_k)|= &(n-m-k)!  \prod\limits_{i=1}^{m} [n-(m-i)]\\
	=& \frac{(n-m-k)! \;n!}{(n-m)!}.														
	\end{align*}
\end{lemma}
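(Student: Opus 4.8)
The plan is to count the permutations in $\mathrm{S}(b_1b_2\ldots b_{k^\prime}B\,b_{k^\prime+1}\ldots b_k)$ by first fixing the relative order of the $n-m-k$ ``free'' elements of $B$ that are not among $b_1,\ldots,b_k$, and then counting how many ways the $m$ elements of $A\setminus B$ can be interleaved into the resulting fixed string of $n-m$ elements of $B$. The key structural observation is that the defining constraint of $\mathrm{S}$ concerns only the relative order of the elements of $B$; it imposes no condition whatsoever on the $m$ elements outside $B$, which are free to occupy any positions. So the count factors as (number of admissible orderings of $B$)$\,\times\,$(number of ways to insert the $m$ outside elements).

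First I would count the admissible orderings of the elements of $B$. The elements $b_1,\ldots,b_k$ have their relative order completely pinned down, and the constraint ``$\pi(b_{k^\prime})>\pi(b)>\pi(b_{k^\prime+1})$ for all $b\in B\setminus\{b_1,\ldots,b_k\}$'' forces all $n-m-k$ free elements of $B$ to lie strictly between $b_{k^\prime}$ and $b_{k^\prime+1}$, while being free to permute among themselves. Hence there are exactly $(n-m-k)!$ admissible orderings of $B$ — this produces the leading factorial factor. (The hypothesis $k^\prime\ge 1$ guarantees there is a genuine gap between $b_{k^\prime}$ and $b_{k^\prime+1}$ into which the free elements are placed, so this step is clean.)

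Next I would count the insertions of the $m$ outside elements into a fixed linear arrangement of the $n-m$ elements of $B$. I would insert them one at a time. Inserting the first outside element into a string of $n-m$ symbols creates $n-m+1$ available slots; inserting the second into the now length-$(n-m+1)$ string gives $n-m+2$ slots; and in general inserting the $i$-th outside element offers $n-m+i = n-(m-i)$ slots. Multiplying over $i=1,\ldots,m$ yields $\prod_{i=1}^m[n-(m-i)]$, which is exactly the advertised product. Combining the two factors gives
\begin{equation*}
|\mathrm{S}(b_1b_2\ldots b_{k^\prime}Bb_{k^\prime+1}\ldots b_k)| = (n-m-k)!\,\prod_{i=1}^{m}[n-(m-i)].
\end{equation*}
Finally, since $\prod_{i=1}^m[n-(m-i)] = (n-m+1)(n-m+2)\cdots n = n!/(n-m)!$, the closed form $\frac{(n-m-k)!\,n!}{(n-m)!}$ follows by a one-line simplification, with the $m=0$ convention matching the empty-product value $1$.

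I expect the only subtle point — rather than a genuine obstacle — to be justifying that the two counting stages are independent, i.e.\ that every admissible full permutation arises exactly once as (an admissible $B$-ordering, a choice of slots for the outside elements) and conversely. This is immediate once one notes that a permutation of $A$ is uniquely recovered from the induced linear order on $B$ together with the position of each outside element relative to that order, and that the $\mathrm{S}$-constraint depends only on the former. A sanity check against Example~\ref{exa1:cont} ($n=5$, $m=2$, $k=2$, giving $(5-2-2)!\cdot 4\cdot 5 = 1\cdot 20 = 20$) confirms the formula.
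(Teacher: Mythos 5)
Your proof is correct, and it shares the paper's overall factorization — $(n-m-k)!$ admissible orderings of the free elements of $B$ (which the $\mathrm{S}$-constraint pins between $b_{k^\prime}$ and $b_{k^\prime+1}$), times the number of ways to place the $m$ elements of $A\setminus B$ — but it handles the second, harder factor by a genuinely different and simpler route. The paper fixes one relative order of the $m$ outside elements, parameterizes their placements by weakly increasing slot indices $i_1\le i_2\le\cdots$, evaluates the resulting nested sum
\[
\sum_{i_1=1}^{n-m+1}\sum_{i_2=i_1}^{n-m+1}\cdots\sum_{i_{m-1}=i_{m-2}}^{n-m+1}(n-m+2-i_{m-1})=\frac{1}{m!}\prod_{i=1}^m(n-m+i)
\]
via an auxiliary lemma proved by induction on $m$ (Lemma~\ref{lem:4} in the appendix), and finally multiplies by $m!$ to undo the fixed ordering. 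Your sequential-insertion argument — the $i$-th outside element sees $n-m+i$ gaps, so the placements telescope directly to $\prod_{i=1}^m(n-m+i)=n!/(n-m)!$ — reaches the same product in one step, with no nested sums, no induction, and no $m!$ correction; it makes the paper's entire auxiliary Lemma~\ref{lem:4} unnecessary. The one point you flag as needing care, that the two counting stages are independent and each permutation of $A$ arises exactly once, is exactly right and is handled correctly by your observation that a permutation is determined by its restriction to $B$ together with the slot of each outside element (equivalently: every relative order of $B$ extends to exactly $n!/(n-m)!$ permutations of $A$, and the $\mathrm{S}$-constraint depends only on that restriction). The trade-off: the paper's computation exhibits the binomial structure $\binom{n}{m}$ of the placements explicitly, while yours is shorter, more elementary, and self-contained.
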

%
Note that the result does not depend on $k^\prime$. A proof is in \ref{A1}. We consider a few of the above examples for illustration. 
\begin{example}[continues=exa2:cont]
	With $A=\{p,q,r,u,v\}$ and  $B=\{p,q\}$, we have $n=5$, $m=3$, and $k=2$ for $\mathrm{S}(pBq)$. Thus, by Lemma~\ref{count:lemma}
	\[ | \mathrm{S}(pBq)|= (5-3-2)! (5-2) (5-1) 5 = 1! 60 =60,\]
	as inferred before by a different argument.
\end{example}
\noindent Finally, consider Example~\ref{exa3:cont}:
\begin{example}[continues=exa3:cont]
	With $|A|=n$ and $B=\{b\}$, we need to show that, for $a,c \in A\setminus B$,
	\[ |\mathrm{S}(aA\setminus Bc)| = |\mathrm{S}(baAc)| + |\mathrm{S}(aAcb)|+ |\mathrm{S}(aAc)|,\]
	because all sets on the right are pairwise disjoint. For the left hand side, $m=1$ and $k=2$, thus 
	\[  |\mathrm{S}(aA\setminus Bc)| = (n-1-2)! (n-(1-1)) = (n-3)! \, n.\]
	For the first two sets on the right hand side, $m=0$ and $k=3$,thus
	\[ |\mathrm{S}(baAc)| = |\mathrm{S}(aAcb)| = (n-0-3)! \, 1=(n-3)!; \]
	and, for the third set $m=0$ and $k=2$, thus
	\[  |\mathrm{S}(aAc)| = (n-0-2)! \, 1 = (n-2)!;\]
	summing up the numbers from the right, $2 (n-3)! + (n-2)!= (n-3)! \,[2+n-2]=(n-3)!\,n,$ which equals the number on the left.
\end{example}
\begin{lemma}\label{lemma:disjoint}	
	For any $B\subset \Phi(A)$ and $a,b \in A\setminus B$
	\begin{equation}
	S(aA\setminus Bb) = \sum_{C\in \mathscr{P}(B)}\sum_{\pi\in \Pi_C} \sum_{(\pi_1 \pi_2)=\pi} S(\pi_1 aAb \pi_2);
	\end{equation}
	here, the last summation over $(\pi_1 \pi_2)=\pi$ means that ranking $\pi$ is split into all possible pairs, for example, $cde$ is split into \[ (cde)(),(cd)(e),(c)(de),()(cde)\] so that $(cd)(e)$ corresponds to $S(cdaAbe)$, $()(cde)$ to $S(aA;bcde)$, etc.
\end{lemma}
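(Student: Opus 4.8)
The plan is to prove the identity by a direct classification (partition) argument: I will show that every ranking in the left-hand set $\mathrm{S}(aA\setminus Bb)$ lies in exactly one of the sets $\mathrm{S}(\pi_1 aAb\pi_2)$ on the right, and conversely that each of those sets is contained in the left-hand set; as in the example following Example~\ref{exa3:cont}, the symbol $\sum$ denotes a disjoint union, so this is what must be checked. Unwinding Definition~\ref{rankingsets} with middle set $A\setminus B$, a permutation $\sigma$ belongs to $\mathrm{S}(aA\setminus Bb)$ iff $\sigma(a)>\sigma(x)>\sigma(b)$ for every $x\in(A\setminus B)\setminus\{a,b\}$, while the elements of $B$ are left entirely unconstrained. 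Thus $a$ is the top and $b$ the bottom element of $A\setminus B$, and each element of $B$ may fall above $a$, below $b$, or strictly between them.

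First I would define the forward assignment. Given $\sigma\in\mathrm{S}(aA\setminus Bb)$, classify the elements of $B$ by their position relative to the block bracketed by $a$ and $b$: let $C_1=\{x\in B:\sigma(x)>\sigma(a)\}$ be those ranked above $a$ and $C_2=\{x\in B:\sigma(x)<\sigma(b)\}$ those ranked below $b$, so that the remaining elements $B\setminus(C_1\cup C_2)$ lie strictly between $a$ and $b$. Put $C=C_1\cup C_2\in\mathscr{P}(B)$, let $\pi_1$ be the ranking $\sigma$ induces on $C_1$ (written top to bottom), let $\pi_2$ be the ranking $\sigma$ induces on $C_2$, and set $\pi=\pi_1\pi_2\in\Pi_C$. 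By construction every element listed in $\pi_1$ is ranked above $a$, which is ranked above every middle element $z\in(A\setminus\{a,b\})\setminus C$, which in turn is ranked above $b$, which is ranked above every element listed in $\pi_2$; comparing with Definition~\ref{rankingsets} for base set $A$, prefix $\pi_1 a$, and suffix $b\pi_2$, this is precisely the statement $\sigma\in\mathrm{S}(\pi_1 aAb\pi_2)$. Since $C$ ranges over $\mathscr{P}(B)$, $\pi$ over $\Pi_C$, and $(\pi_1,\pi_2)$ over the splits of $\pi$, this shows the right-hand union exhausts the left-hand set.

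It then remains to establish disjointness and the reverse inclusion. For disjointness I would observe that the triple $(C,\pi_1,\pi_2)$ extracted above is uniquely determined by $\sigma$: the sets $C_1,C_2$ are read off unambiguously from the positions of the $B$-elements relative to $a$ and $b$, and the induced orders $\pi_1,\pi_2$ are likewise forced, so no ranking can lie in two distinct terms $\mathrm{S}(\pi_1 aAb\pi_2)$. For the reverse inclusion, note that in $\mathrm{S}(\pi_1 aAb\pi_2)$ (base set $A$) the middle region $(A\setminus\{a,b\})\setminus C$ contains every element of $(A\setminus B)\setminus\{a,b\}$ because $C\subseteq B$; hence $a$ is ranked above, and $b$ below, all of $(A\setminus B)\setminus\{a,b\}$, which is exactly the defining requirement of $\mathrm{S}(aA\setminus Bb)$.

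The step I expect to require the most care is confirming that the three-way split of $B$ matches on both sides without over- or under-counting. Concretely, one must check that the middle elements $B\setminus C$ are genuinely unconstrained on the right, so they impose nothing beyond lying between $a$ and $b$, consistent with their being free on the left; and that summing over $\pi\in\Pi_C$ together with all splits $(\pi_1,\pi_2)=\pi$ is the same as summing over all ways of partitioning $C$ into an ordered ``above-$a$'' part $\pi_1$ and an ordered ``below-$b$'' part $\pi_2$, which is exactly the data produced by the forward map. Once this bookkeeping of positions is pinned down, the partition, and hence the claimed identity, follows; the counting Lemma~\ref{count:lemma} can be invoked as an independent numerical check that the cardinalities on both sides agree.
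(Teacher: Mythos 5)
Your proof is correct and takes essentially the same approach as the paper: a partition argument that classifies each ranking in $S(aA\setminus Bb)$ by where the elements of $B$ fall relative to $a$ and $b$, establishes disjointness of the terms $S(\pi_1 aAb\,\pi_2)$, and checks both inclusions. The only cosmetic difference is that you obtain disjointness from the uniqueness of the extracted triple $(C,\pi_1,\pi_2)$, whereas the paper exhibits explicit conflicting order constraints in a case analysis on $C=C'$ versus $C\ne C'$.
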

The proof is in \ref{A2}.  The following example illustrates the lemma.
\begin{example}[continues=exa2:cont]
	With $A=\{p,q,r,u,v\}$ and  $B=\{p,q\}$, let us consider $S(uA\setminus Bv)$; thus, $n=5$, $|A\setminus B|=3=n-m$, so $m=2$, and $k=2$. From Lemma~\ref{count:lemma} ,
	\[ |S(uA\setminus Bv)|= \frac{(n-m-k)! \;n!}{(n-m)!}	=\frac{1!\;5!}{3!} = 20.\]
	These 20 rankings are listed in the first column of Table~\ref{tab2} partitioned into  the  additive components $S(\pi_1 uAv \pi_2)$ (right-hand side in Lemma~\ref{lemma:disjoint} ).  The second column shows the corresponding subsets $C$  of $B$. For instance, with $C=\emptyset$, $S(\pi_1 uAv \pi_2)=S(uAv)$ and $|S(uAv)|=6$; and with $C=\{p,q\}$, $\Pi_C=\{pq,qp\}$ and for each permutation there are 3 ways to split them into $(\pi_1,\pi_2)$: $(pq)(), ()(pq), (p)(q)$ and $(qp)(), ()(qp), (q)(p)$.
	\begin{table}\label{tab2}
		\begin{center}	
			$	\begin{array}{|c|c|c|c|c} 
			\hline 	
			S(uA\setminus Bv)&S(\pi_1 uAv \pi_2)&|S(\pi_1 uAv \pi_2)| \\ 	\hline 	\hline
			upqrv& & n=5; m=0;k=2 \\ 
			uprqv &  & \frac{(n-m-k)! \;n!}{(n-m)!}	=6\\ 
			uqprv& C=\emptyset &\\ 
			uqrpv & &\\ 
			urpqv & &\\
			urqpv & &\\ \hline 
			puqrv& &  n=5; m=0; k=3\\
			purqv& C=\{p\} & \frac{(n-m-k)! \;n!}{(n-m)!}	=2\\ 
			uqrvp&  & \\
			urqvp& &  \\ \hline
			quprv & & n=5; m=0; k=3\\
			qurpv& C=\{q\} & \frac{(n-m-k)! \;n!}{(n-m)!}	=2\\
			uprvq& &\\
			urpvq& &\\ \hline
			pqurv& &  n=5; m=0; k=4\\
			qpurv&  &  \frac{(n-m-k)! \;n!}{(n-m)!}	=1\\
			urvpq&C=\{p,q\} &\\
			urvqp& & \\
			purvq& &\\
			qurvp& & \\ \hline
			\end{array}$\caption{Example~\ref{tab2} for explanation, see text.}
		\end{center}
	\end{table}	
\end{example}
\begin{lemma}\label{lemma:partition_kprime}
	For all $1 \le k^\prime < k \le n$,
\begin{align*}
\sum_{a \in A\setminus\{a_1,\ldots, a_k\}}S(a_1,\ldots a_{k\prime}\,aA\,a_{k^\prime +1}\ldots a_{k}) & &\\ 
& \hspace{-2cm}=\sum_{a \in A\setminus\{a_1,\ldots, a_k\}}S(a_1,\ldots a_{k^\prime}\,A\,a\,a_{k^\prime +1}\ldots a_{k}) \\
&\hspace{-2cm}= S(a_1,\ldots a_{k^\prime}\,A\,a_{k^\prime +1}\ldots a_{k})
\end{align*}
\end{lemma}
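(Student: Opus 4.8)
The plan is to read both equalities as identities between \emph{sets} of permutations, with the summation sign denoting disjoint union, exactly as in Example~\ref{exa3:cont} and Lemma~\ref{lemma:disjoint}. Writing $M=A\setminus\{a_1,\ldots,a_k\}$ for the block of $n-k$ unnamed ``middle'' elements, I would first unwind Definition~\ref{rankingsets} with $B=A$: a permutation $\pi$ lies in $S(a_1\ldots a_{k^\prime}Aa_{k^\prime+1}\ldots a_k)$ iff it ranks $a_1>\cdots>a_{k^\prime}$ at the top in this order, $a_{k^\prime+1}>\cdots>a_k$ at the bottom in this order, and every element of $M$ strictly between $a_{k^\prime}$ and $a_{k^\prime+1}$ in arbitrary internal order. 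Reading the definition again with one extra symbol inserted, $\pi\in S(a_1\ldots a_{k^\prime}aAa_{k^\prime+1}\ldots a_k)$ adds exactly the requirement $\pi(a)>\pi(b)$ for all $b\in M\setminus\{a\}$, i.e.\ that $a$ is the $\pi$-largest element of $M$; symmetrically, $\pi\in S(a_1\ldots a_{k^\prime}Aaa_{k^\prime+1}\ldots a_k)$ adds exactly that $a$ is the $\pi$-smallest element of $M$.

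With this reading the first equality is a partition statement. Each summand $S(a_1\ldots a_{k^\prime}aAa_{k^\prime+1}\ldots a_k)$ is cut out of $S(a_1\ldots a_{k^\prime}Aa_{k^\prime+1}\ldots a_k)$ by one additional constraint, so the inclusion ``$\subseteq$'' is immediate. For the reverse inclusion together with disjointness I would use that a permutation is a strict linear order: any $\pi$ in the right-hand set has a \emph{unique} $\pi$-largest element $a^\ast\in M$ (here $M\neq\emptyset$, since we are in the range $k<n$), and then $\pi$ belongs to the summand indexed by $a^\ast$ and to no other, precisely because the maximum is unique. Hence the sets $S(a_1\ldots a_{k^\prime}aAa_{k^\prime+1}\ldots a_k)$, $a\in M$, partition the right-hand set, which is the first equality; the second equality follows by the identical argument with ``largest/maximum'' replaced by ``smallest/minimum''. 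As a consistency check, Lemma~\ref{count:lemma} gives $|S(a_1\ldots a_{k^\prime}Aa_{k^\prime+1}\ldots a_k)|=(n-k)!$ and each summand has cardinality $(n-k-1)!$, and there are $|M|=n-k$ summands, so the cardinalities balance.

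I do not expect a real obstacle here, as the argument is a clean partition-by-extremal-element. The only points that demand care are the bookkeeping that inserting $a$ immediately to the left (resp.\ right) of $A$ in the $S(\cdots)$ notation corresponds exactly to declaring $a$ the top (resp.\ bottom) of the middle block $M$ --- a direct reading of Definition~\ref{rankingsets} --- and the degenerate boundary case $k=n$, where $M$ is empty; there the substantive content is vacuous, and I would simply state the lemma for the meaningful range $1\le k^\prime<k<n$.
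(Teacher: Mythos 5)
Your proof is correct and is exactly the ``straightforward proof'' that the paper omits: partition $S(a_1\ldots a_{k^\prime}\,A\,a_{k^\prime+1}\ldots a_k)$ according to which element of $M=A\setminus\{a_1,\ldots,a_k\}$ is ranked highest (for the first sum) or lowest (for the second) within the middle block, with uniqueness of the maximum/minimum of a strict linear order giving both disjointness and exhaustion, and the cardinality check $(n-k)\cdot(n-k-1)!=(n-k)!$ confirming the count. Your caveat about the boundary case is also a genuine catch: for $k=n$ one has $M=\emptyset$, so the left-hand unions are empty while the right-hand set is not, meaning the lemma as stated should be read on the range $1\le k^\prime<k<n$, which is the only range in which the paper actually uses it.
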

\noindent The straightforward proof is omitted. Next, we define the union of certain sets of rankings which the probability measure will ultimately be constructed on.
For $ k \ge2$ and distinct $a_1,a_2,\ldots,a_k \in A,$
\begin{equation}\label{SA}
S_A(a_1,\ldots,a_k)= \sum_{k^\prime =1}^{k-1} S(a_1\ldots a_{k^\prime}\,A\,a_{k^\prime +1}\ldots a_k).
\end{equation}
\section{A Block-Marschak type lemma}\label{bmrep}
\noindent This lemma provides a critical step in our proof. It is a variant of the well-known result by Block \& Marschak (1960) (see also Marley \& Louviere 2005, section on random ranking models).
\begin{lemma}\label{lem:BM}
	$(A,\mathbb{BWP})$ is a best-worst random utility system if and only if there exists 	a probability measure $\mathrm{P}[\,.\,]$ on $\mathscr{P}(\Pi)$ satisfying
	\begin{equation}\label{extBM}
	BW_B(a,b) = \mathrm{P}[S(aBb)] 
	\end{equation}
	for all $B\in \Phi(A)$	and $a,b \in B$.
\end{lemma}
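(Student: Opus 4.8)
The plan is to prove both directions of the equivalence, with the backward (sufficiency) direction being routine and the forward (necessity) direction carrying the real content.

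For the \emph{if} direction, suppose a probability measure $\mathrm{P}[\,.\,]$ on $\mathscr{P}(\Pi)$ satisfies $BW_B(a,b)=\mathrm{P}[S(aBb)]$ for all $B\in\Phi(A)$ and $a,b\in B$. I would construct the jointly distributed random variables $\{U_c\mid c\in A\}$ directly from the random permutation: sample $\pi\in\Pi$ according to $\mathrm{P}$, and set $U_c=\pi(c)$ (or any fixed strictly decreasing function of the rank, to match the orientation of $>$). Then for a fixed $B$ the event $\{U_a\ge U_c\ge U_b \text{ for all } c\in B\setminus\{a,b\}\}$ is, because $\mathrm{P}(U_a=U_b)=0$ automatically holds for a ranking, exactly the set of permutations $\pi$ with $\pi(a)>\pi(c)>\pi(b)$ for every intermediate $c\in B$, and by Definition~\ref{rankingsets} this is precisely $S(aBb)$. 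Hence $P\!\left(\bigcap_{c\in B\setminus\{a,b\}}\{U_a\ge U_c\ge U_b\}\right)=\mathrm{P}[S(aBb)]=BW_B(a,b)$, so $(A,\mathbb{BWP})$ is a best-worst random utility system. The only point needing care is confirming that $S(aBb)$ as defined really does capture the betweenness constraint for all of $B$ and imposes no constraint on elements outside $B$, which is immediate from the definition.

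For the \emph{only if} direction, assume a random representation $\{U_c\mid c\in A\}$ exists. The idea is to transport the joint law of the $U_c$ onto rankings. Since (as established in the discussion preceding Theorem~\ref{theor 2}) $P(U_a=U_b)=0$ for all distinct $a,b$, with probability one the sampled values $\{U_c(\omega)\}$ are all distinct and therefore induce a unique strict ranking $\pi_\omega\in\Pi$, namely the permutation ordering $A$ by decreasing utility. Define $\mathrm{P}[S]=P(\{\omega:\pi_\omega\in S\})$ for $S\subseteq\Pi$; this is a genuine probability measure on $\mathscr{P}(\Pi)$ because the map $\omega\mapsto\pi_\omega$ is defined almost surely. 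It then remains to verify Equation~\ref{extBM}: for $a,b\in B$, the event defining $BW_B(a,b)$ is $\{U_a\ge\mathrm{M}^+_{B\setminus\{a,b\}}\ge\mathrm{M}^-_{B\setminus\{a,b\}}\ge U_b\}$, which (again almost surely, using distinctness) coincides with $\{\pi_\omega\in S(aBb)\}$, giving $BW_B(a,b)=\mathrm{P}[S(aBb)]$.

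The main obstacle, and the step I would treat most carefully, is the almost-sure passage from the joint distribution of real-valued utilities to a \emph{single} well-defined ranking. Ties among the $U_c$ must be excluded simultaneously for all pairs; the pairwise conclusion $P(U_a=U_b)=0$ must be lifted to $P(\exists\, a\ne b: U_a=U_b)=0$ via a finite union bound over the $\binom{n}{2}$ pairs, which is legitimate precisely because $A$ is finite. Once the no-ties event has full probability, every sample point outside a null set determines exactly one $\pi_\omega$, and the induced push-forward is a bona fide measure on $\mathscr{P}(\Pi)$. After that, matching the defining event of $BW_B(a,b)$ with the set $S(aBb)$ is a direct translation between the language of order statistics ($\mathrm{M}^+_{B\setminus\{a,b\}}$, $\mathrm{M}^-_{B\setminus\{a,b\}}$) and the betweenness description of Definition~\ref{rankingsets}, so the remaining verification is bookkeeping rather than a genuine difficulty.
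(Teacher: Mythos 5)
Your proposal is correct and takes essentially the same approach as the paper: your sufficiency construction (sample $\pi$ by $\mathrm{P}$ and set $U_c=\pi(c)$) is exactly the paper's definition of the joint distribution in Eq.~(\ref{BMrev}), and your necessity argument, pushing the joint law of the $U_c$ forward onto rankings via the almost-surely defined ranking map, is precisely the paper's $p(\{\pi\}) = \mathrm{P}[ U_{\pi^{-1}(1)} < U_{\pi^{-1}(2)} <\cdots < U_{\pi^{-1}(n)} ]$. Your explicit union-bound argument ruling out ties simultaneously over all $\binom{n}{2}$ pairs is simply the content behind the paper's ``it is easy to verify'' remark, so the two proofs differ only in level of detail, not in substance.
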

\begin{proof}  (Lemma~\ref{lem:BM})\\
	For simplicity, we set $A=\{1,2,\ldots,n\}$ and take $\ge$ as the natural order of the reals.
	
	(Necessity) Let $\{U_i \,|\, 1\le i \le n  \}$ be a random representation of $(A, \mathbb{BWP})$, with joint probability measure $\mathrm{P}$ satisfying~(\ref{extBM}). For any $\pi \in \Pi$, define
	\[ p(\{\pi\}) = \mathrm{P}[ U_{\pi^{-1}(1)} < U_{\pi^{-1}(2)} <\cdots < U_{\pi^{-1}(n)} ].\]
	It is easy to verify that $p$ is a probability distribution on $\Pi$ that can be extended to a probability measure $\mathrm{P}$ on $\mathscr{P}(\Pi)$, satisfying~(\ref{extBM}).
	
	(Sufficiency) Conversely, suppose that (\ref{extBM}) holds for some probability measure $\mathrm{P}$ on $\mathscr{P}(\Pi)$. Define the joint distribution of a collection $\{U_i \,|\, 1\le i \le n  \}$ of random variables by
	\begin{equation}\label{BMrev}
	\mathrm{P}[U_1=\xi_1, U_2=\xi_2,\ldots,U_n=\xi_n] =
	\begin{cases}
	\mathrm{P}(\pi)  & \text{ if $ \pi(i) = \xi_i$, $1\le i \le n$, }\\
	0 & \text{otherwise}	
	\end{cases}
	\end{equation}
	for all $n$-tuples $\xi_1, \xi_2, \ldots, \xi_n$ of real numbers. It can be checked that then 
	\begin{equation}\label{BMrev2}
	BW_B(i,j) = \mathrm{P}[U_i \ge M_B^+ \ge M_B^- \ge U_j]
	\end{equation}
	for any $B \in \Phi(A)$, $i,j \in B$.
\end{proof}

\section{Defining a probability measure on the rankings of $A$} \label{probonranking}
\noindent This section completes the sufficiency part of the main theorem (Theorem~\ref{main}). Thus, we assume the best-worst BM polynomials to be non-negative. The first step is to find a function on the sets \[S(a_1,\ldots,a_{k^\prime}Aa_{k^\prime +1},\ldots,a_k)\subset \Pi_A,\] with $|A|=n$ and $1\le k^\prime <k\le n$. To this end, a function $\mathrm{F}^\prime$ is defined inductively. For $k=2$ (thus, $k^\prime =1$), define 
\begin{align}
\mathrm{F}^\prime[S(a_1Aa_2)]:= K_{a_1 a_2, \emptyset}.
\end{align}
For $k\ge 3$, $k^\prime <k$, we define 
\begin{align}
\hspace{-0cm}\mathrm{F}^\prime[S(a_1\ldots a_{k^\prime}A\,a_{k^\prime +1} \ldots a_k)] &:=\nonumber\\ 
&\nonumber\\ 
&\hspace{-6cm}\frac{\mathrm{F}^\prime[S(a_1\ldots a_{k^\prime-1} A\, a_{k^\prime +2 }\ldots a_{k-1})] \; \;K_{a_{k^\prime} a_{k^\prime+1},  \{a_1,\ldots,a_{k^\prime-1},a_{k^\prime+2}, \ldots,a_{k-1}\}}/(n-k)!}
{\displaystyle\sum_{\pi \in  \Pi_{\{a_1,\ldots,a_{k^\prime-1},a_{k^\prime+2}, \ldots,a_{k-1}\}}} \mathrm{F}^\prime[S(\pi(a_1) \ldots \pi(a_{k^\prime -1}) A \,\pi(a_{k^\prime +2}) \ldots \pi(a_{k-1})]}
\end{align}
assuming the denominator $ >0$, and set $\mathrm{F}^\prime=0$ otherwise. 
\begin{lemma}\label{functionFprime} 
\begin{itemize}
	\item[(a)] $\mathrm{F}^\prime \ge 0$;
	\item[(b)] \[\sum_{\pi \in \Pi_B} \mathrm{F}^\prime[S(\pi_1 aAb\, \pi_2)]=K_{ab,B}/(n-k)!\] 
	for any $B \subset A$ and  $a,b \in A\setminus B$, and  $\pi$ of the form $(\pi_1 \pi_2)$.
\end{itemize}
\end{lemma}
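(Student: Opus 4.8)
The plan is to prove (a) and (b) together by induction on $|B|$, equivalently on the number $k=|B|+2$ of distinguished elements of the sets $S(\pi_1 a A b\,\pi_2)$, reading both claims off the recursive definition of $\mathrm{F}'$. The base case is $B=\emptyset$, where $\mathrm{F}'[S(aAb)]$ is fixed by the defining identity; there (a) is merely $K_{ab,\emptyset}=BW_A(a,b)\ge 0$, and (b) reduces to that identity (up to the normalizing factor $(n-k)!$).

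For the inductive step of (a) I would argue straight from the shape of the recursion. Its numerator is a product of a value of $\mathrm{F}'$ on a strictly smaller configuration -- non-negative by the induction hypothesis -- the polynomial $K_{a_{k'}a_{k'+1},D}$, non-negative by the standing hypothesis of Theorem~\ref{main}, and the positive constant $1/(n-k)!$; its denominator is a sum of values of $\mathrm{F}'$ of the same kind, hence non-negative. The quotient is thus non-negative whenever the denominator is positive, and the convention $\mathrm{F}'=0$ settles the remaining case. So (a) asks nothing beyond the induction hypothesis.

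The substance is (b). I would insert the recursion into the left-hand sum and use the decisive structural fact that, as $\pi$ runs over $\Pi_B$ and over all splits $(\pi_1,\pi_2)$, the polynomial and the normalizing denominator depend only on the underlying \emph{set} $B$, not on how it is arranged around $A$: in every summand the index set $D$ equals $B$, so $K_{ab,B}/(n-k)!$ and the denominator are constant and factor out of the sum. What is left is the sum of the numerator factors $\mathrm{F}'[S(\text{inner})]$, which runs bijectively over exactly the configurations constituting the denominator; the numerator sum therefore cancels the denominator and yields $K_{ab,B}/(n-k)!$. The partition identity of Lemma~\ref{lemma:partition_kprime}, the disjoint decomposition of Lemma~\ref{lemma:disjoint}, and the Möbius inversion of Theorem~\ref{moebius} are what certify this bijection and keep the $(n-k)!$ bookkeeping consistent as $k$ decreases. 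A pleasant by-product is that the actual values of $\mathrm{F}'$ on the lowest-order configurations never matter, since they enter (b) only through the ratio of a numerator sum to an identical denominator.

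I expect the true obstacle to be the degenerate branch in which the denominator vanishes. There the convention forces every term $\mathrm{F}'[S(\pi_1 a A b\,\pi_2)]$ to be $0$, so the left side of (b) is $0$, and I must prove separately that $K_{ab,B}=0$, lest the identity fail. This is not local: it requires recognising the vanishing denominator as a best-worst marginal mass over all arrangements of $B$ around $A$ -- obtained through Lemma~\ref{lemma:partition_kprime} and Theorem~\ref{moebius} -- and then invoking non-negativity of the polynomials $K$, so that a sum of non-negative terms equalling zero is forced to be termwise zero, whence $K_{ab,B}=0$. Discharging this degenerate case, rather than the routine telescoping of the generic case, is where the real work will lie.
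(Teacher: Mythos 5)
Your handling of the generic branch is exactly what the paper intends when it says part (b) ``is immediate from the above recursive definition'': substitute the recursion into the left-hand sum, observe that $K_{ab,B}/(n-k)!$ and the normalizing denominator depend only on the pair $a,b$ and the \emph{set} $B$, factor them out, and cancel the surviving sum of numerator factors against the denominator; part (a) is likewise read off the recursion together with the standing non-negativity of the $K$'s. Two bookkeeping points, however, are not optional. First, the cancellation works only if the denominator of the recursion is read as running over \emph{all} arrangements of $B$ around $A$ --- all orders \emph{and} all prefix/suffix splits, including the one-sided configurations $S(\sigma A)$ and $S(A\sigma)$, which do occur as numerator factors (take $\pi_1$ or $\pi_2$ empty). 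The paper's displayed formula fixes the split sizes at $(k'-1,\,k-k'-1)$; under that literal reading the denominators differ from term to term, your ``bijection'' fails, and summing over the $k-1$ possible splits of a $(k-2)$-element sequence produces $(k-1)\,K_{ab,B}/(n-k)!$ rather than $K_{ab,B}/(n-k)!$. You silently adopt the reading that makes the lemma true; this needs to be said, and it forces $\mathrm{F}'$ to be defined on one-sided configurations, which the recursion as printed never does. Second, your base case does not actually close: with the paper's $\mathrm{F}'[S(aAb)]=K_{ab,\emptyset}$, claim (b) at $B=\emptyset$ reads $K_{ab,\emptyset}=K_{ab,\emptyset}/(n-2)!$, false for $n>3$; ``up to the normalizing factor'' glosses a real mismatch (the base value must be $K_{ab,\emptyset}/(n-2)!$, or the factor must be carried differently).

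The genuine gap is the degenerate branch, which you correctly single out as the real work but do not close. If the denominator vanishes, the convention makes the left side of (b) equal to $0$, so you must prove $K_{ab,B}=0$. Your argument --- the denominator is a sum of non-negative terms, hence vanishes termwise, ``whence $K_{ab,B}=0$'' --- does not reach that conclusion. Termwise vanishing, via the inductive hypothesis applied to the two-sided constituents, yields $K_{cd,B\setminus\{c,d\}}=0$ for all distinct $c,d\in B$, together with the vanishing of the one-sided $\mathrm{F}'$-values; none of these is the polynomial $K_{ab,B}$, whose distinguished indices $a,b$ lie \emph{outside} $B$. Bridging the two requires a balance identity --- the best-worst analogue of the identity $\sum_{b\in B}K_{b,B\setminus\{b\}}=\sum_{a\in A\setminus B}K_{a,B}$ that Falmagne's best-choice proof relies on --- expressing the denominator, up to a positive factor, as $\sum_{a',b'\in A\setminus B}K_{a'b',B}$; only then does non-negativity force $K_{ab,B}=0$. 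Neither Lemma~\ref{lemma:partition_kprime} nor Theorem~\ref{moebius} supplies this, and the two-sided terms alone provably cannot: in a random utility system in which every element of $B$ always outranks every element of $A\setminus B$, one has $K_{cd,B\setminus\{c,d\}}=0$ for all $c,d\in B$ while $K_{ab,B}=BW_{A\setminus B}(a,b)>0$ is perfectly possible --- the missing mass sits precisely on the one-sided configurations your sketch omits. So the degenerate case remains open in your proposal; to be fair, the paper's own two-sentence proof ignores it entirely, so here you see the difficulty more clearly than the source, but you have not resolved it.
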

\noindent  Non-negativity of $\mathrm{F}^\prime$  \textit{(a)} follows from assuming non-negative best-worst BM polynomials and \textit{(b)} is  immediate from the above recursive definition. Note that $(n-k)!$ is number of elements in $S(\pi_1 aAb\, \pi_2)$. This suggests the following interpretation of the  $K_{ab,B}$ as the probability measure of all rankings of $A$ with $a$ as best and $b$ as worst ignoring all alternatives that are in $B$, with a specific number of elements of $B$ above $a$ and below $b$ according to $(\pi_1 \pi_2)$.

Next, we extend $\mathrm{F}^\prime$ to a function $\mathrm{F}$ on the sets $S_A(a_1,\ldots,a_k)$ ($k\le n$) by defining:
\begin{align}
\mathrm{F}[S_A(a_1,\ldots,a_k)]&\equiv \mathrm{F}\left[ \sum_{k^\prime =1}^{k-1} S(a_1\ldots a_{k^\prime}\,A\,a_{k^\prime +1}\ldots a_k)\right]\nonumber \\
&:=  \sum_{k^\prime =1}^{k-1}  \mathrm{F}^\prime [S(a_1\ldots a_{k^\prime}\,A\,a_{k^\prime +1}\ldots a_k)]
\end{align}

\noindent For $\mathrm{F}$ to be a probability distribution on $ \mathscr{P}(\Pi)$, we need to show
\begin{itemize}
	\item[(i)] $\mathrm{F} \ge 0$;
	\item[(ii)] $\sum_{\pi \in \Pi} \mathrm{F}[S(\pi(a_1), \ldots, \pi(a_n)] = 1.$
\end{itemize}
Obviously, (i) follows from the non-negativity of $\mathrm{F}^\prime$. We obtain (ii) as a special case of the general result that for $2\le j\le n$  ($k^\prime < j$)
\begin{equation}\label{sumgen}
\sum_{C\in \mathscr{P}(A,j)} \sum_{\substack{\pi \in \Pi_C\\ C=\{a_1,\ldots,a_j\}}} \mathrm{F}[S_A(\pi(a_1), \ldots, \pi(a_j)] =1;
\end{equation}
(ii) is then obtained from (\ref{sumgen}) for $j=n$. Equation~\ref{sumgen} is proved by induction on $j$. For $j=2$, we have 
\begin{align*}
\sum_{C\in \mathscr{P}(A,2)} \sum_{\substack{\pi \in \Pi_C\\ C=\{a_i,a_\ell\}}} \mathrm{F}[S_A(\pi(a_i),\pi(a_\ell)]&= \sum_{\substack{a_i,a_\ell \in A\\ a_i \ne a_\ell}} \mathrm{F}[S_A(a_i,a_\ell)]\\
&  \hspace{-6cm}= \sum_{\substack{a_i,a_\ell \in A\\ a_i \ne a_\ell}} \mathrm{F}^\prime[S(a_i A a_\ell)]
  =\sum_{\substack{a_i,a_\ell \in A\\ a_i \ne a_\ell}}K_{a_i a_\ell, \emptyset}
  = \sum_{\substack{a_i,a_\ell \in A\\ a_i \ne a_\ell}} BW_A(a_i,a_\ell)   =1.
\end{align*}
Now assume that (\ref{sumgen}) holds for all $j$ with $2\le j\le k-1<n$  ($k^\prime < k$); then 
\begin{align*}
&\sum_{C\in \mathscr{P}(A,k)} \sum_{\substack{\pi \in \Pi_C\\ C=\{a_1,\ldots,a_k\}}} \mathrm{F}[S_A(\pi(a_1), \ldots,  \pi(a_k)] &\\
& =\sum_{C\in \mathscr{P}(A,k)} \sum_{\substack{\pi \in \Pi_C\\ C=\{a_1,\ldots,a_k\}}}\sum_{k^\prime =1}^{k-1}  \mathrm{F}^\prime [S(a_1\ldots a_{k^\prime}\,A\,a_{k^\prime +1}\ldots a_k)]\\
&=\sum_{C^\prime\in \mathscr{P}(A,k-1)} \sum_{\substack{\pi^\prime \in \Pi_C^\prime\\ C^\prime=\{a_1,\ldots,a_{k-1}\}}}\sum_{k^\prime =1}^{k-2} \;\sum_{a\in A\setminus C^\prime} \mathrm{F}^\prime[S(\pi^\prime(a_1) \ldots \pi^\prime(a_{k^\prime })\,a A \,\pi^\prime(a_{k^\prime +1 }) \ldots \pi^\prime(a_{k-1})]&\\
& =\sum_{C^\prime\in \mathscr{P}(A,k-1)} \sum_{\substack{\pi^\prime \in \Pi_{C^\prime}\\ C^\prime=\{a_1,\ldots,a_{k-1}\}}}\sum_{k^\prime =1}^{k-2}  \mathrm{F}^\prime [S(\pi^\prime(a_1)\ldots \pi^\prime(a_{k^\prime})\,A\,\pi^\prime(a_{k^\prime +1})\ldots \pi^\prime(a_{k-1})]\\
&=\sum_{C^\prime\in \mathscr{P}(A,k-1)} \sum_{\substack{\pi^\prime \in \Pi_C^\prime\\ C^\prime=\{a_1,\ldots,a_{k-1}\}}}\mathrm{F}[S(\pi^\prime(a_1), \ldots, \pi^\prime(a_{k-1})]\\
&=1
\end{align*}
by the induction hypothesis. Thus, (16) holds for $j=n$. We extend the probability distribution on $\Pi$ in a standard way to obtain a probability measure $\mathrm{P}$ on $\mathscr{P}(\Pi)$. 

In view of Lemma~\ref{lem:BM}, we need to show, finally,  that
\begin{equation}\label{fin}
	BW_B(a,b) = \mathrm{P}[S(aBb)] 
\end{equation}
for all $B\in \Phi(A)$	and $a,b \in B$. Now, 
\begin{align*}
\mathrm{P}[S(aBb)] &=\mathrm{P} \left[ \sum_{C\in \mathscr{P}(A\setminus B)} \sum_{\pi\in \Pi_C} \sum_{\pi =(\pi_1 \pi_2)} S(\pi_1 aA \,b\, \pi_2)\right] &&\text{by Lemma~\ref{lemma:disjoint}}\\
&=  \sum_{C\in \mathscr{P}(A\setminus B)} K_{ab, C} &&\text{by Lemma~\ref{functionFprime}}\\
&= BW_B(a,b) && \text{by Theorem~\ref{moebius}}
\end{align*}
completing the proof of Theorem~\ref{main}.

\section{Conclusion}
This paper adds to the theoretical underpinnings of the best-worst choice paradigm: non-negativity of certain linear combinations of best-worst choice probabilities (i.e. the best-worst Block-Marschak polynomials) is shown to be necessary and sufficient for a random utility representation of these choice probabilities. Most results on this paradigm, up to now, are contained in \cite{marley2005} relating models of best choices, worst choices, and best-worst choices, based on random ranking and random utility, to each other and pointing to open problems. Recently, \cite{depalma2017} presented additional relations between these paradigms under slightly stricter random utility representations and derived various expressions for independent and generalized extreme value distributed utilities.

As pointed out repeatedly, our results can, perhaps surprisingly, be considered a straightforward extension of Falmagne's work on representing best choices. In this context, it should be noted that \cite{fiorini2004} gave an alternative proof of Falmagne's result using polyhedral combinatorics. His proof is very short and elegant, reducing the representation theorem to a complete linear description of the \textit{multiple choice polytope}. In view of this, it seems obvious to look for an analogous description of the best-worst choice polytope\footnote{For definitions, we refer to the literature mentioned here.}, as has been undertaken in \cite{doignon2015}, but we are not aware of a solution of the representation problem using these techniques yet.

Given that non-negativity of the  Block-Marschak polynomials for best choices guarantees the existence of an underlying random utility (aka\textit{ random scale}), testing of this property has recently been in the focus of interesting work in signal detection theory for recognition memory (see, e.g. \cite{kellen2018}).  Thus, it would not be surprising to see analogous applications appear for best-worst Block-Marschak polynomials. 

Finally, \cite{falmagne1978} presented some results on the uniqueness of the random utility representation for best choices (see also \cite{colonius1984} for additional results). We leave it as an open problem to derive corresponding properties for the case of the best-worst random utility representation developed here.
\section*{Acknowledgment} I am grateful to Tony Marley, Adele Diederich, and Jean-Claude Falmagne for helpful comments. Any errors and deficiencies, of course, are the author's responsibility alone. This work was supported in part by DFG grant CO 94/6-1 to H. Colonius and Tony  Marley (U. of Victoria, Canada).
\newpage
\appendix
\section{Proof of  Lemma~\ref{count:lemma}}
\label{A1}
For a proof of Lemma~\ref{count:lemma}, we need another lemma.
\begin{lemma} For integer $m,n$ with $2\le m <n$,
	\label{lem:4}
	\begin{equation}\label{lemma4}
	\sum_{i_1=1}^{n-m+1}\,	\sum_{i_2=i_1}^{n-m+1} \cdots 	\sum_{i_{m-1}=i_{m-2}}^{n-m+1}	(n-m+2-i_{m-1}) = \frac{1}{m!} \,\prod_{i=1}^m (n-m+i)= {n\choose m}.
	\end{equation}
\end{lemma}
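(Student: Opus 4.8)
The plan is to give the sum on the left a direct combinatorial meaning, read off $\binom{n}{m}$ by a stars-and-bars count, and then verify that the stated product equals the same binomial coefficient.

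First I would set $N := n-m+1$, so that every summation index ranges up to $N$ and the region of summation becomes precisely the set of weakly increasing integer tuples $1 \le i_1 \le i_2 \le \cdots \le i_{m-1} \le N$, with summand $N+1-i_{m-1}$. The key observation is then that $N+1-i_{m-1}$ counts exactly the integers $i_m$ satisfying $i_{m-1} \le i_m \le N$. Hence, absorbing the summand as one more freely ranging index, the entire left-hand side equals the number of weakly increasing tuples $1 \le i_1 \le \cdots \le i_{m-1} \le i_m \le N$ of length $m$ drawn from $\{1,\ldots,N\}$. By the standard stars-and-bars count of multisets, this number is $\binom{N+m-1}{m}$, and substituting $N = n-m+1$ gives $\binom{(n-m+1)+m-1}{m} = \binom{n}{m}$.

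Finally I would check the middle expression directly: $\frac{1}{m!}\prod_{i=1}^m(n-m+i) = \frac{1}{m!}\,(n-m+1)(n-m+2)\cdots n = \frac{n!}{m!\,(n-m)!} = \binom{n}{m}$, which is just the recognition of the product as a falling factorial. This closes the chain of equalities in the lemma.

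The step I expect to need the most care is the reindexing: justifying rigorously that summing $N+1-i_{m-1}$ over the region is the same as appending an extra index $i_m$ and counting tuples of length $m$. If a purely algebraic argument is preferred over the bijective one, I would instead prove by induction on the number of nested sums the auxiliary identity $T_t(a) = \binom{N+t+1-a}{t+1}$, where $T_t(a)$ denotes the innermost block of $t$ summations with outer index starting at $a$; the inductive step uses the hockey-stick identity $\sum_{j=t+1}^{M}\binom{j}{t+1} = \binom{M+1}{t+2}$ together with the reversal $j = N+t+1-i$. Evaluating at $t = m-1$ and $a = 1$ then reproduces $\binom{N+m-1}{m} = \binom{n}{m}$, in agreement with the combinatorial count above.
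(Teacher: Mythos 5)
Your proof is correct, but it follows a genuinely different route from the paper's. The paper proves Lemma~\ref{lem:4} by induction on $m$: the base case $m=2$ is computed explicitly, and the inductive step rests on the reduction
\[
\sum_{i_1=1}^{n-m}\cdots\sum_{i_{m}=i_{m-1}}^{n-m}(n-m+1-i_{m})
=\frac{n-m}{m+1}\,\sum_{i_1=1}^{n-m+1}\cdots\sum_{i_{m-1}=i_{m-2}}^{n-m+1}(n-m+2-i_{m-1}),
\]
which the paper declares ``straightforward but tedious algebra'' and omits. Your argument replaces that unverified algebra with a transparent bijective count: writing $N=n-m+1$, the summand $N+1-i_{m-1}$ is exactly the number of admissible values of an extra index $i_m$ with $i_{m-1}\le i_m\le N$, so the whole nested sum counts the weakly increasing $m$-tuples from $\{1,\dots,N\}$, i.e.\ the multisets of size $m$ from an $N$-element set, which is $\binom{N+m-1}{m}=\binom{n}{m}$ by stars and bars; the identification of $\frac{1}{m!}\prod_{i=1}^{m}(n-m+i)$ with $\binom{n}{m}$ is then the same routine falling-factorial computation both proofs need. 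What your approach buys is completeness (no omitted step), brevity, and a conceptual explanation of \emph{why} the answer is a binomial coefficient --- which meshes nicely with how Lemma~\ref{count:lemma} uses the identity, since the nested sum there arises precisely from choosing, with repetition, the gaps among $n-m+1$ positions into which the $m$ elements of $A\setminus B$ are inserted. Indeed, the paper's footnote mentions an ``alternative proof without induction'' communicated by a colleague; your argument is presumably of exactly that kind. Your fallback inductive variant via the hockey-stick identity is also sound and, if written out, would supply (in cleaner form) the very algebra the paper's induction step omits.
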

\begin{proof}
	Proof is by induction over $m$, $m<n$. For $m=2$, 
	\begin{align*}
	\sum_{i_1=1}^{n-1}\, (n-i_1)&=  	\sum_{i_1=1}^{n-1}\, n -\sum_{i_1=1}^{n-1}\, i_1\\
	&= n (n-1) -\frac{1}{2}n(n-1) \\
	&=\frac{1}{2}\, n\, (n-1),
	\end{align*}
	which is easily seen to be equal to the right-hand side of  (\ref{lemma4}) for $m=2$. Now let (\ref{lemma4}) be true for $m$; then, by straightforward but  tedious  algebra (omitted),
	\begin{align*}
	\sum_{i_1=1}^{n-m}\,	\sum_{i_2=i_1}^{n-m} \cdots 	\sum_{i_{m}= i_{m-1}}^{n-m}	(n-m+1-i_{m})&\\
	= \frac{n-m}{m+1}\;	\sum_{i_1=1}^{n-m+1}\,	\sum_{i_2=i_1}^{n-m+1} \cdots 	\sum_{i_{m-1}=i_{m-2}}^{n-m+1}	(n-m+2-i_{m-1}),
	\end{align*}
	which completes the induction step\footnote{An alternative proof without induction was suggested to me by Florian Hess (Oldenburg).}.
\end{proof}
\begin{proof}(Lemma~\ref{count:lemma})\\
	We have to find the number of rankings on $A$ compatible with the elements in $B$  satisfying the partial ranking $\pi(b_1)>\dots > \pi(b_k)$. For $m=0$, we have $A=B$ and 
	\[  |\mathrm{S}(b_1b_2\ldots b_{k^\prime}Bb_{k^\prime+1}b_{k^\prime+2}\ldots b_k)| = (n-k)!\]
	is just the number of permutations of the elements of $A$ with a fixed order of the $k$ elements. For $m=1$, a similar argument goes through.
	
	Note that, for the first of the $m$  $(m\ge 1)$ elements in $A\setminus B$, say $a_1$, there are $n-m+1$ possible positions relative to the $n-m$ elements in $B$ (see, e.g. Table\ref{tab1}).
	Let $i_1$, $1\le i_1\le n-m+1$ be the number of the position chosen for $a_1$,   $i_2$ the number of the position chosen for the second element $a_2$, etc. We can assume that $i_1\le i_2\le \cdots \le i_{m-n}$ in order to maintain the ranking. For example, 
	assuming $i_1=1$, then element $a_2$ also has $n-m+1$ possible positions; if $i_1=2$, then position 1 is no longer available for $a_2$ but all positions $\ge 2$ so that $\pi(a_1)>\pi(a_2)$ (see, e.g. Table\ref{tab1}).  In order to count the number of possibilities for the first two elements, consider the sum
	\begin{equation}
	\label{sum1}
	\sum_{i_1=1}^{n-m+1} \, (n - m + 2 - i_1) =\frac{1}{2} (n-m+1)(n-m+2)
	\end{equation}
	%
	Continuing this way for all $m$ elements results in 
	\begin{align*}
	&	\sum_{i_1=1}^{n-m+1}\,	\sum_{i_2=i_1}^{n-m+1} \cdots 	\sum_{i_{m-1} = i_{m-2}}^{n-m+1}	(n-m+2-i_{m-1})  \\
	&	  =\frac{1}{m!} \,\prod_{i=1}^m (n-m+i) ,
	\end{align*}
	with the equality following according to Lemma~\ref{lem:4}. Because we have considered a specific order for the $m$ elements of $B$, in order to obtain the total number of rankings, the above has to be multiplied by the number $m!$ of possible permutations. Moreover, the number of elements in $B$ that had not been ranked, amounts to $n-m-k$. Considering all possible orders of  these $n-m-k$, we need to also multiply by $(n-m-k)!$, yielding the lemma.
\end{proof}
\section{Proof of  Lemma~\ref{lemma:disjoint}}
\label{A2}
\begin{proof} 
	First, we show that the union in the right member of Eq.~\ref{lemma:disjoint} is disjoint.  For $(\pi_1,\pi_2)=\pi\ne \pi^\prime=(\pi_1^\prime,\pi_2^\prime)$, the sets $S(\pi_1 aAb \pi_2)$ and $S(\pi_1^\prime aAb \pi_2^\prime)$ are clearly disjoint, so that disjointness remains to be shown for the two first summation signs in (\ref{lemma:disjoint}).  For now, let us abbreviate $S(\pi_1 aAb \pi_2)$  as $S_\pi(aAb)$. For $|B|=1$, the lemma's claim is implicit in Example~\ref{exa3:cont} , and for  $|B|=2$, Example~\ref{exa2:cont} demonstrates the partition. 
	
	Let $|B|\ge 3$ take $C,C\,^\prime \in  \mathscr{P}(B)$, $\pi \in \Pi_C, \pi^\prime \in \Pi_{C^\prime}$, $\pi \ne \pi^\prime$. Suppose $C=C\,^\prime$. Then $|C|\ge 2$ (otherwise,  $\Pi_C=\{\pi\}$, contradicting $\pi \ne \pi^\prime$), and there will be at least two elements $d,e \in C$ such that such that $\xi(d)<\xi(e)$ for all $\xi \in S_\pi(aAb)$, while $\xi^\prime(e)<\xi^\prime(d)$ for all $\xi^\prime \in S_{\pi^\prime}(aAb)$. Thus
	\begin{equation}\label{cap}
	S_\pi(aAb)\cap S_{\pi^\prime}(aAb)=\emptyset.
	\end{equation}
	Then case $C\ne C\,^\prime$ is similar. For example, suppose $d\in C\setminus C\,^\prime$, then $\xi(d)>\xi(a)$ or $\xi(b)>\xi(d)$ for all $\xi \in S_\pi(aAb)$, while $\xi^\prime(a)>\xi^\prime(d) >\xi^\prime(b)$  for all $\xi^\prime \in S_{\pi^\prime}(aAb)$, entailing again Eq.~\ref{cap} .
	
	We turn to the proof of equality, and write $G(a,b,A,B)$ for the right member of \ref{lemma:disjoint} . Assume $\xi \in S(aA\setminus Bb)$. Then, either $\xi(a) > \xi(c)>\xi(b)$ for all $c \in B$, implying $\xi \in S(aAb)\subset G(a,b,A,B)$; or, there are $c_1,c_2,\ldots ,c_j \in B$ such that \[\xi(c_1)>\xi(c_2)> \ldots > \xi(c_{j_1})>\xi(a)>\xi(b)>\xi(c_{j_1+1})>\ldots >\xi(c_j).\] This yields $\xi \in S(c_1c_2\ldots c_{j_1}\,aAb\,c_{j_1+1}\ldots c_j)\subset G(a,b,A,B).$ We conclude that $S(aA\setminus Bb)\subset G(a,b,A,B)$.The converse implication follows from the fact that for any choice of $C\in \mathscr{P}(B)$ and $\pi \in \Pi_C$, we have $S_\pi(aAb) \subset S(aA\setminus Bb)$.
\end{proof}
\newpage
\bibliography{best-worst}
\end{document}